\newtheorem{Theo}{Theorem}[section]
\newtheorem{Prop}[Theo]{Proposition}
\newtheorem{Cor}[Theo]{Corollary}
\newtheorem{Lemma}[Theo]{Lemma}
\theoremstyle{definition}
\newcommand{\rep}{{\rm rep}}
\newcommand{\Rep}{{\rm Rep}}
\newcommand{\Hom}{{\rm Hom}}
\newcommand{\mmod}{{\rm mod}}
\newcommand{\C}{\mathcal{C}}
\begin{document}

\begin{abstract} Let $k$ be a field, $Q$ a quiver with
countably many vertices and $I$ an ideal of $kQ$ such that $kQ/I$
has finite dimensional Hom-spaces. In this note, we prove that there
is no almost split sequence ending at an indecomposable not finitely
presented representation of the bound quiver $(Q,I)$. We then get
that an indecomposable representation $M$ of $(Q,I)$ is the ending
term of an almost split sequence if and only if it is finitely
presented and not projective. The dual results are also true.
\end{abstract}

\title{A non-existence theorem for almost split sequences}

\bigskip
\bigskip

\author{Charles Paquette}
\address{C. Paquette, Dept. of Math., U. de Sherbrooke, 2500 boul. de
l'Université, Sherbrooke, Qc, Canada, J1K 2R1}
\email{charles.paquette@usherbrooke.ca}

\bigskip
\bigskip
\maketitle

\section*{Introduction}

The theory of almost split sequences has been introduced in the
seventies (see \cite{AR2,AR3}) and is essential in the study of the
representation theory of finite dimensional algebras.  The framework
of this theory is general and the almost split sequences can be
studied in other contexts. Indeed, many results concerning the
existence of almost split sequences exist; see for example
\cite{A1,A2,AR2,HuPena,But,Zimm}. However, few is known about the
non-existence of such sequences, at least in categories which are
not Hom-finite. Some criteria following from the definition of left
or right almost split morphisms are discussed in \cite{Hu}. However,
these conditions do not give the description of the objects $C$ for
which there is a right almost split morphism ending in $C$.  The
motivation of this note comes from a work of the author with R.
Bautista and S. Liu in which the Auslander-Reiten theory of the
finitely presented representations over an infinite quiver is
discussed; see \cite{BLP}.

The main categories considered in this note are the categories of
locally finite dimensional representations of infinite bound
quivers. However, some results are stated in more general
categories, so that it could be useful in an other context.  Section
1 is devoted to the main definitions concerning the representations
of quivers. In Section 2, we develop our main tool for non-existence
of almost split sequences.  Section 3 contains the main theorem,
giving necessary and sufficient conditions on an indecomposable
representation $C$ to be the end-term of an almost split sequence.

\section{Background on representations of quivers}

Let $Q=(Q_0,Q_1)$ be a quiver with countably many vertices and $k$
be any field. A typical example of such a quiver is a locally finite
quiver. Let $I$ be an ideal of the path category $kQ$ such that $I
\subseteq Q^2$, where $Q^2$ denotes the ideal of $kQ$ generated by
all paths of length two.  We set $A_{Q,I}=kQ/I$ for the quotient
category of $kQ$ by the ideal $I$. We call $I$ \emph{admissible} if
for $x,y \in Q_0$, $kQ(x,y)/I(x,y)$ is finite dimensional.  In this
case, the pair $(Q,I)$ is called a \emph{bound quiver}.  Observe
that in such a case, the number of arrows between two given vertices
must be finite. If $Q$ is a locally finite quiver such that for any
pair $x,y \in Q_0$, there is a finite number of paths from $x$ to
$y$, then $(Q,I=0)$ is a bound quiver. Also, if $Q$ is finite and
$I$ is admissible in the sense of \cite{ASS}, then $(Q,I)$ is a
bound quiver.

\medskip

Let ${\rm Rep}(Q,I)$ denote the category of all right-modules over
$A_{Q,I}$. Such a module is a covariant functor from $A_{Q,I}$ to
the category ${\rm Mod}(k)$ of all $k$-vector spaces. Hence, $M \in
\Rep(Q,I)$ is given by two families $(M(x))_{x \in Q_0}$ and
$(M(\alpha))_{\alpha \in Q_1}$ where, for $x \in Q_0$, $M(x)$ is a
$k$-vector space and for an arrow $\alpha : x \to y$, $M(\alpha) :
M(x) \to M(y)$ is a $k$-linear map. Moreover, the maps
$(M(\alpha))_{\alpha \in Q_1}$ must satisfy the relations of the
ideal $I$.  If $M(x)$ is finite dimensional for every $x \in Q_0$,
then $M$ is said to be \emph{locally finite dimensional}.  The full
subcategory of $\Rep(Q,I)$ of all such representations is denoted by
$\rep(Q,I)$. Observe that $\rep(Q,I)$ and $\Rep(Q,I)$ are not
Hom-finite in general.  However, from \cite[Section 3.6]{GR}, every
indecomposable object in $\rep(Q,I)$ has a local endomorphism
algebra.

\medskip

Let $\overline{A}$ denote the  algebra associated to $A_{Q,I}$ (with
no identity if $Q$ is infinite), that is,
$$\displaystyle \overline{A}=\oplus_{x,y \in A_{Q,I}}A_{Q,I}(x,y)$$ as $k$-vector spaces and the multiplication is induced by the
composition of morphisms in $A_{Q,I}$. For $x \in A_0$, let $e_x: x
\to x$ denote the identity morphism. Then $e_x$ is a primitive
idempotent in $\overline{A}$. It is easy to see that there exists an
equivalence between $\rep(Q,I)$ and the category $\mmod
(\overline{A})$ of all right $\overline{A}$-modules $M$ such that
$$M = \oplus_{x \in Q_0}Me_x$$ and $Me_x$ is finite dimensional for
any vertex $x$ in  $A_{Q,I}$.  We will make these identifications in
the sequel. Observe that for each $x \in Q_0$, one has an
$\overline{A}$-module $e_x\overline{A}$ which corresponds to a
locally finite dimensional representation $P_x$ of $(Q,I)$.  Note
that $P_x$ is projective indecomposable and has a one dimensional
top. A representation $M$ in $\rep(Q,I)$ is said to be
\emph{finitely generated} if one has an epimorphism
$$P \stackrel{f}{\rightarrow} M \to 0$$
with $P$ isomorphic to a finite direct sum of representations of the
form $P_x$, $x \in Q_0$.  Observe that if $f' : P' \to M$ is any
other such morphism, then ${\rm Ker}\,f'$ is finitely generated if
and only if ${\rm Ker}\,f$ is.  In this case, $M$ is said to be
\emph{finitely presented}.

\medskip

Now, let $(Q^{\rm \, op}, I^{\rm \, op})$ be the opposite bound
quiver of $(Q,I)$, that is, $Q^{\rm op}$ is the opposite quiver of
$Q$ and $I^{\rm \, op}$ is the ideal of $kQ^{\rm\, op}$ such that
$kQ^{\rm\,op}/I^{\rm \, op}$ is the opposite category of $A_{Q,I}$.
Let $D = \Hom(-,k)$ be the duality between finite dimensional
$k$-vector spaces and let $D_Q : \rep(Q,I) \to \rep(Q^{\rm \, op},
I^{\rm \, op})$ denote the pointwise duality defined as follows. If
$M \in \rep(Q,I)$, then one sets $D_QM(x) = DM(x)$ and for $\alpha:
x \to y$, $D_QM(\alpha) : DM(y) \to DM(x)$ is the transpose of the
map $M(\alpha)$.  Now let $f : M \to N$ be a morphism in
$\rep(Q,I)$, that is, a family $\{f_x:M(x) \to N(x)\}_{x \in Q_0}$
of $k$-linear maps such that for each arrow $\alpha : x \to y$,
$N(\alpha)f_x = f_yM(\alpha)$.  We set $D_Q(f)$ to be the morphism
$D_QN \to D_QM$ such that $D_Q(f)_x : DN(x) \to DM(x)$ is the
transpose of the map $f_x$. It is easily verified that $D_Q$ defines
a functor and is a duality. If $P'_x$ is the projective
indecomposable representation in $\rep(Q^{\rm \, op}, I^{\rm \,
op})$ associated to the vertex $x \in Q^{\rm \, op}_0$, then $I_x :=
D_{Q^{\rm \, op}}(P'_x)$ is an indecomposable injective
representation in $\rep(Q,I)$ with a one dimensional socle.  A
representation $M$ in $\rep(Q,I)$ is said to be \emph{finitely
co-generated} if one has a monomorphism
$$0\to M \stackrel{f}{\rightarrow} I$$
with $I$ isomorphic to a finite direct sum of representations of the
form $I_x$, $x \in Q_0$. If, moreover, the cokernel of $f$ if
finitely co-generated, then $M$ is said to be \emph{finitely
co-presented}.  As for finitely presented representations, the
notion of finitely co-presented representation does not depend on
the chosen morphism $f$.

\medskip

Let $\rep^+(Q,I)$ be the full subcategory of $\rep(Q,I)$ of the
finitely presented representations.  If $I=0$, then $\rep^+(Q,I)$ is
abelian by \cite{BLP}.  However, when $I$ is non-zero, $\rep^+(Q,I)$
need not be abelian.  If $M \in \rep^+(Q,I)$ is indecomposable and
not projective, then one has an almost split sequence
$$0 \to \tau M \to E \to M \to 0$$
in $\rep(Q,I)$ with $\tau M$ finitely co-presented; see \cite{A2}.
The dual results also hold.  When $I=0$ and $Q$ is locally finite,
the precise description of the quivers $Q$ for which all such
sequences lie in $\rep^+(Q,I)$ is given in \cite{BLP}. There is a
similar characterization given in \cite{CS} when $(Q,I)$ is such
that $Q$ is locally finite and $I$ is locally finitely generated.
They find the bound quivers $(Q,I)$ such that the category of finite
dimensional representations has almost split sequences.

\medskip

In this paper, however, we will not restrict to the category
$\rep^+(Q,I)$. We shall work in the whole category $\rep(Q,I)$.  We
know that every indecomposable non-projective representation in
$\rep^+(Q,I)$ is the end-term of an almost split sequence in
$\rep(Q,I)$.  We shall show that all other indecomposable
representations in $\rep(Q,I)$ are not end-terms of almost split
sequences.

\section{Left and right almost split morphisms}

Let $\C$ be an abelian $k$-category and let $M\in \C$ be
indecomposable. A morphism $f:E \to M$ in $\C$ is said to be a
\emph{right almost split} if it is not a retraction and any morphism
$L \to M$ in $\C$ which is not a retraction factors through $f$.
Dually, $f:M \to E'$ in $\C$ is said to be a \emph{left almost
split} morphism if it is not a section and any morphism $M \to L$ in
$\C$ which is not a section factors through $f$.  A non-split short
exact sequence
$$0\to L \stackrel{f}{\rightarrow} M \stackrel{g}{\rightarrow} N \to 0 $$
in $\C$ with $L,N$ indecomposable, $f$ left almost split and $g$
right almost split is called an \emph{almost split sequence}.  Such
sequences play a crucial role, for instance, in the study of the
module category of an Artin algebra, see \cite{AR2}, or in a
Hom-finite Krull-Schmidt category (the definition of almost split
sequences there is slightly different since a Krull-Schmidt category
is not necessarily abelian), see \cite{L3}.  For more general facts
concerning almost split sequences, the reader is referred to
\cite{AR3}.

\begin{Prop} \label{mainprop}
Let $N \in \C$ be indecomposable with a chain
$$N_0 \stackrel{f_0}{\longrightarrow} N_1 \stackrel{f_1}{\longrightarrow} N_2 \stackrel{f_2}{\longrightarrow} \cdots$$
of monomorphisms in $\C$ such that the direct limit of the directed
family $\{N_i,f_i\}_{i\ge 0}$ exists and is $\{N,\varphi_i\}_{i\ge
0}$ with the $\varphi_i$ being proper monomorphisms. Suppose
moreover that $\Hom_{\C}(N_i,E)$ is finite dimensional for all $i
\ge 0$ and all objects $E$ in $\C$. Then there is no right almost
split morphism ending in $N$ in $\C$.
\end{Prop}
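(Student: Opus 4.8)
The plan is to argue by contradiction: suppose there is a right almost split morphism $f:E\to N$ in $\C$, and use the chain to build a section of $f$, which is impossible since $f$ is not a retraction.

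First I would check that every structure map $\varphi_i:N_i\to N$ factors through $f$. Since $\varphi_i$ is a monomorphism which is not an isomorphism and a monic retraction is an isomorphism, $\varphi_i$ is not a retraction; as $f$ is right almost split, $\varphi_i=f g_i$ for some $g_i:N_i\to E$. The obstacle is that these $g_i$ need not be compatible with the $f_i$ — one only knows $f(g_{i+1}f_i-g_i)=0$ — so they do not immediately glue to a morphism $N\to E$. To repair this I would pass to the sets of all such factorizations: for each $i$ put
$$S_i=\{\,g\in\Hom_{\C}(N_i,E)\mid f g=\varphi_i\,\}.$$
Because composition is $k$-bilinear, $S_i$ is a coset of the linear subspace $\{h\mid fh=0\}$ of $\Hom_{\C}(N_i,E)$, hence a finite-dimensional affine space by hypothesis, and it is nonempty by the previous step. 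Precomposition with $f_i$ sends $S_{i+1}$ into $S_i$, since $f(g f_i)=\varphi_{i+1}f_i=\varphi_i$, giving affine transition maps $\rho_i:S_{i+1}\to S_i$; thus $\{S_i,\rho_i\}_{i\ge 0}$ is an inverse system of nonempty finite-dimensional affine spaces.

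Then I would invoke the Mittag--Leffler argument: the images $(\rho_i\rho_{i+1}\cdots\rho_{i+n-1})(S_{i+n})$, $n\ge 1$, form a descending chain of affine subspaces of the finite-dimensional space $S_i$, so it stabilizes; hence the system satisfies the Mittag--Leffler condition and $\varprojlim_i S_i\neq\varnothing$. Choosing $(g_i)_{i\ge 0}$ in this inverse limit yields $f g_i=\varphi_i$ and $g_{i+1}f_i=g_i$ for all $i$, so by the universal property of $N=\varinjlim N_i$ there is $g:N\to E$ with $g\varphi_i=g_i$ for every $i$. Finally, $fg$ and $\id_N$ are morphisms $N\to N$ that agree after composition with each $\varphi_i$, so $fg=\id_N$ by uniqueness in the universal property; thus $f$ is a retraction, a contradiction. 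I expect the technical heart to be precisely this passage from the individual factorizations $\varphi_i=f g_i$ to a coherent family — the finite-dimensionality of the $\Hom$-spaces is used exactly to guarantee the Mittag--Leffler condition, and without it the inverse limit could be empty.
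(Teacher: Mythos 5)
Your proposal is correct and takes essentially the same route as the paper: the paper likewise reduces to an inverse system of finite-dimensional factorization spaces (its ``normalized'' maps inside the subspaces $L_i\subseteq\Hom(N_i,E)$), uses stabilization of the descending chain of images to get surjective transition maps on the stable images, and lifts to a compatible family yielding a section of $f$. The only cosmetic difference is that you work directly with the affine cosets $S_i$ and quote the Mittag--Leffler condition, while the paper works with the linear spans and a scalar-normalization argument.
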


\begin{proof}
Suppose the contrary.  Let $h: E \to N$ be a right almost split
morphism in $\C$.  Since $\{N,\varphi_i\}_{i\ge0}$ is the direct
limit of $\{N_i,f_i\}_{i\ge 0}$, we have
$\varphi_{i+1}f_{i}=\varphi_i$ for $i\ge 0$. Let $L_i$ denote the
subspace of $\Hom(N_i,E)$ of the morphisms $g$ for which $hg$ is a
multiple of $\varphi_i$. Observe that $L_i$ is finite dimensional. A
morphism $g \in L_i$ for which $hg=\varphi_i$ is called
\emph{normalized}. Since $h$ is right almost split and the
$\varphi_i$ are proper monomorphisms, each $L_i$ contains a
normalized morphism and hence is non-zero. Then one has a non-zero
map
$$g_i = L_{i+1} \to L_i$$
which is induced by $f_i$ and sends a normalized map to a normalized
one.

Since $(*)$ is almost split, $\varphi_i$ yields a normalized map
$v_i : N_i \to E \in L_i$ such that
$$v_if_{i-1}f_{i-2}\cdots f_{j} = g_{j}\cdots g_{i-1}(v_i)$$ is
normalized in $L_{j}$ for $0 \le j < i$.  Let $0\ne M_{ij} = {\rm
Im}(g_jg_{j+1}\cdots g_{i-1})$ for $0 \le j < i$ with $M_{ii} =
L_i$. The chain
$$M_{jj} \supseteq M_{j+1,j} \supseteq M_{j+2,j} \cdots$$
of finite dimensional $k$-vector spaces yields an integer $r_j \ge
j$ for which $0 \ne M_{r_j,j} = M_{k,j}$ whenever $k \ge r_j$.
Moreover, each such $M_{r_j,j}$ contains a normalized map.  Then the
maps $g_i$ clearly induce non-zero maps $$\overline{g}_i :
M_{r_{i+1},i+1} \to M_{r_i,i}.$$  We claim that these maps are
surjective.  Let $u \in M_{r_i,i}$.  For every positive integer
$r>i+1$, $u \in {\rm Im}(g_ig_{i+1} \cdots g_{r-1})$ and hence,
there exists an element $u_r \in {\rm Im}(g_{i+1} \cdots g_{r-1})$
such that ${g_i}(u_r)=u$.  But then $u_{r_{i+1}} \in
M_{r_{i+1},i+1}$ is such that $\overline{g_i}(u_{r_{i+1}})=u$,
showing the claim.  Now, set $u_0 \in M_{r_0,0}$ be a normalized
map.  Then there exists $u_1 \in M_{r_1,1}$ such that
$\overline{g_0}(u_1) = u_0$.  Observe that if $u_1$ is not
normalized, then there exists $\alpha \in k\backslash\{0\}$ such
that $\alpha u_1$ is normalized and hence that
$\overline{g_0}(\alpha u_1)=\alpha u_0$ is normalized, showing that
$\alpha = 1$.  Hence, $u_1$ is normalized.  Choose such $u_i \in
M_{r_i,i}$ for all positive integers $i$.  Hence, for $i \ge 0$, we
have that $hu_i=\varphi_i$ and $u_{i+1}f_i=u_i$. Since $N$ is the
direct limit of the $N_i$, the family of morphisms $u_i : N_i \to E$
yields a unique morphism $h' : N \to E$ such that $h'\varphi_i =
u_i$ for $i\ge 0$.  Now, $hh'\varphi_i = hu_i = \varphi_i$ for all
$i$, showing, by uniqueness, that $hh'=1_N$, contradicting the fact
that $h$ is right almost split.
\end{proof}

We state the dual result.  The proof is dual.

\begin{Prop}
Let $N \in \C$ be indecomposable with a chain
$$N_0 \stackrel{f_0}{\longleftarrow} N_1 \stackrel{f_1}{\longleftarrow} N_2 \stackrel{f_2}{\longleftarrow} \cdots$$
of epimorphisms in $\C$ such that the inverse limit of the directed
family $\{N_i,f_i\}_{i\ge 0}$ exists and is $\{N,\varphi_i\}_{i\ge
0}$ with the $\varphi_i$ being proper epimorphisms. Suppose moreover
that $\Hom_{\C}(E,N_i)$ is finite dimensional for all $i \ge 0$ and
all objects $E$ in $\C$. Then there is no left almost split morphism
starting in $N$ in $\C$.
\end{Prop}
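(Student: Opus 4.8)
The plan is to deduce this statement formally from Proposition \ref{mainprop} by working in the opposite category $\C^{\rm op}$. The notion of an abelian $k$-category is self-dual, so $\C^{\rm op}$ is again an abelian $k$-category in which $N$ is indecomposable. Under passage to $\C^{\rm op}$ an epimorphism becomes a monomorphism, so the given chain of epimorphisms $N_0 \stackrel{f_0}{\longleftarrow} N_1 \stackrel{f_1}{\longleftarrow} \cdots$ in $\C$ is a chain of monomorphisms $N_0 \to N_1 \to \cdots$ in $\C^{\rm op}$; an inverse limit in $\C$ is a direct limit in $\C^{\rm op}$, so $\{N,\varphi_i\}_{i\ge 0}$ is its direct limit in $\C^{\rm op}$, the $\varphi_i$ being proper monomorphisms there; and $\Hom_{\C^{\rm op}}(N_i,E)=\Hom_{\C}(E,N_i)$ is finite dimensional for all $i$ and all $E$. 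Thus every hypothesis of Proposition \ref{mainprop} is met in $\C^{\rm op}$.

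Next I would observe that a left almost split morphism $h:N\to E$ in $\C$ is exactly a right almost split morphism $E\to N$ in $\C^{\rm op}$: being a section in $\C$ is being a retraction in $\C^{\rm op}$, and a morphism $N\to L$ factoring through $h$ in $\C$ is a morphism $L\to N$ factoring through $h$ in $\C^{\rm op}$. Hence Proposition \ref{mainprop} applied in $\C^{\rm op}$ yields that there is no right almost split morphism ending in $N$ in $\C^{\rm op}$, i.e. no left almost split morphism starting in $N$ in $\C$. Equivalently, one could simply rerun the proof of Proposition \ref{mainprop} with every arrow reversed: for a supposed left almost split $h:N\to E$, set $L_i\subseteq\Hom_{\C}(E,N_i)$ to be the morphisms $g$ with $gh$ a multiple of $\varphi_i$, check each $L_i$ is finite dimensional and contains a normalized map, form the induced maps $L_{i+1}\to L_i$, $g\mapsto f_ig$, stabilize the descending chains of images, lift a normalized map through the resulting tower of surjections to compatible normalized maps $u_i:E\to N_i$ with $u_ih=\varphi_i$ and $f_iu_{i+1}=u_i$, and use the universal property of the inverse limit $N$ to assemble them into $h':E\to N$ with $\varphi_ih'h=u_ih=\varphi_i$, forcing $h'h=1_N$ --- a contradiction.

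I do not expect a genuine obstacle here; the only thing requiring care is the bookkeeping of the dualization --- verifying that ``abelian $k$-category'', ``proper epimorphism'' and ``left almost split morphism'' go over under $(-)^{\rm op}$ to ``abelian $k$-category'', ``proper monomorphism'' and ``right almost split morphism'', and that inverse limits in $\C$ are direct limits in $\C^{\rm op}$ --- after which the conclusion is immediate from Proposition \ref{mainprop}.
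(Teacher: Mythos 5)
Your proposal is correct and matches the paper exactly: the paper gives no argument beyond the remark that ``the proof is dual,'' and your dualization via $\C^{\rm op}$ (together with the sketched re-run of the argument with arrows reversed) is precisely the intended justification.
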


\section{The left and right almost split morphisms in $\rep(Q,I)$}

We start with this lemma.

\begin{Lemma} \label{firstlemma}
Let $M \in {\rm rep}(Q,I)$.  Then
\begin{enumerate}[$(1)$]
    \item The representation $M$ is non-finitely generated if and only if there exists a
chain
$$M_0 \subseteq M_1 \subseteq M_2 \subseteq \cdots$$
of finitely generated proper subrepresentations of $M$ such that
$$\bigcup_{i \ge 0}M_i = M.$$
    \item The representation $M$ is non-finitely co-generated if and only if $D_Q(M)$ is non-finitely generated.
\end{enumerate}

\end{Lemma}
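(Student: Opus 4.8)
The plan is to prove part (1) directly and then deduce part (2) by dualizing through $D_Q$. For part (1), the backward implication is immediate: if $M = \bigcup_{i\ge 0} M_i$ with each $M_i$ a proper subrepresentation, then $M$ cannot be finitely generated, since any finite set of generators would already lie in some single $M_i$ (each generator lives in finitely many vertex spaces, and the union is filtered), forcing $M_i = M$, a contradiction. So the content is the forward implication: construct such an exhausting chain when $M$ is not finitely generated.

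For the forward direction, I would enumerate the vertices $Q_0 = \{x_1, x_2, \ldots\}$ and build the chain by a greedy exhaustion. Since $Q_0$ is countable and each $M(x)$ is finite dimensional, fix for each $j$ a finite $k$-basis of $M(x_j)$; this gives a countable generating set $\{m_1, m_2, \ldots\}$ of $M$ as an $\overline{A}$-module. Let $M_i$ be the subrepresentation of $M$ generated by $m_1, \ldots, m_i$ (equivalently, the sub-$\overline{A}$-module they generate, which lies in $\rep(Q,I)$ since $A_{Q,I}$ has finite-dimensional Hom-spaces, so each $M_i$ is locally finite dimensional). Then $M_0 \subseteq M_1 \subseteq \cdots$ is a chain of finitely generated subrepresentations with $\bigcup_i M_i = M$. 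The one remaining point is to arrange that the inclusions are \emph{proper}: if $M$ is not finitely generated, then no $M_i$ equals $M$, so after discarding repetitions (passing to the subsequence of indices where a new element is actually added, which is infinite precisely because $M$ is not finitely generated) we obtain a strictly increasing chain of finitely generated proper subrepresentations still exhausting $M$.

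For part (2), I would use that $D_Q : \rep(Q,I) \to \rep(Q^{\mathrm{op}}, I^{\mathrm{op}})$ is a duality sending the indecomposable injectives $I_x$ to the indecomposable projectives $P'_x$ and, more generally, sending a finite direct sum of $I_x$'s to a finite direct sum of $P'_x$'s, and sending monomorphisms to epimorphisms (and conversely). Thus a monomorphism $0 \to M \to I$ with $I$ a finite sum of injectives corresponds under $D_Q$ to an epimorphism $D_Q I \to D_Q M \to 0$ with $D_Q I$ a finite sum of the projectives $P'_x$; hence $M$ is finitely co-generated in $\rep(Q,I)$ if and only if $D_Q M$ is finitely generated in $\rep(Q^{\mathrm{op}}, I^{\mathrm{op}})$. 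Equivalently, $M$ is \emph{non}-finitely co-generated iff $D_Q M$ is non-finitely generated, which is the assertion.

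The main obstacle is a minor bookkeeping issue rather than a conceptual one: one must be careful that the greedily generated subrepresentations $M_i$ genuinely lie in $\rep(Q,I)$ (i.e.\ remain locally finite dimensional), which is where admissibility of $I$ — finite-dimensionality of the spaces $A_{Q,I}(x,y)$ — is used, and that the chain can be made strictly increasing exactly when $M$ fails to be finitely generated. Neither point is deep, but both should be stated explicitly.
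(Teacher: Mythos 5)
Your proposal is correct and follows essentially the same route as the paper: an exhaustion of $M$ by finitely generated subrepresentations built from a countable enumeration (the paper groups generators by an increasing chain of finite vertex sets rather than one at a time), the observation that a finite generating set must land in a single $M_j$ for the converse (the paper phrases this via the finite-dimensional top of $M$), and the duality $D_Q$ for part (2). The minor differences -- your explicit subsequence extraction for strictness (not actually required by the statement) and your appeal to admissibility for local finite-dimensionality of the $M_i$ (automatic, since they are subrepresentations of the locally finite dimensional $M$) -- do not affect the argument.
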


\begin{proof}
Since $Q$ has a countable number of vertices, there exists a chain
$$E_0 \subseteq E_1 \subseteq E_2 \subseteq \cdots$$
of finite subsets of $Q_0$ such that $\cup_{i \ge 0}E_i = Q_0$. For
$i \ge 0$, let $M_i$ be the subrepresentation of $M$ generated by
the elements in $\oplus_{v \in E_i}Me_v$.  Then $M_i$ is finitely
generated such that $\cup_{i\ge 0}M_i =M$.  If $M$ is non-finitely
generated, then it is clear that the $M_i$ are proper
subrepresentations. Now, let $M$ be finitely generated and let
$$M_0 \subseteq M_1 \subseteq M_2 \subseteq \cdots$$
be a chain as in the statement. Since the top of $M$ is finite
dimensional and the union of the $M_i$ is $M$, there exists an
integer $j$ such that $M_j(x)=M(x)$ for any vertex $x \in Q_0$ which
supports the top of $M$.  This yields $M=M_j$, contradicting the
fact that the $M_i$ are proper subrepresentations.  The second
statement trivially follows from the fact that $D_Q$ is a duality.
\end{proof}

\begin{Lemma}
Let $M,N \in {\rm rep}(Q,I)$.  Then
\begin{enumerate}[$(1)$]
    \item If $M$ is finitely generated,
then $\Hom(M,N)$ is finite dimensional.
    \item If $N$ is finitely co-generated,
then $\Hom(M,N)$ is finite dimensional.
\end{enumerate}
\end{Lemma}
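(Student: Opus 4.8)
The plan is to deduce everything from one elementary observation: for each vertex $x \in Q_0$ one has a natural isomorphism $\Hom(P_x, N) \cong N(x)$, and $N(x)$ is finite dimensional because $N$ lies in $\rep(Q,I)$. Concretely, under the identification $\rep(Q,I) \cong \mmod(\overline{A})$, the representation $P_x$ corresponds to $e_x\overline{A}$, and evaluation at the idempotent $e_x$ gives the isomorphism $\Hom_{\overline{A}}(e_x\overline{A}, N) \cong Ne_x = N(x)$. This is the only computation that needs to be checked with any care, and it is the standard Yoneda-type argument for the projective $P_x$.

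For part $(1)$, I would start from the definition: since $M$ is finitely generated there is an epimorphism $P \stackrel{f}{\rightarrow} M \to 0$ with $P \cong P_{x_1} \oplus \cdots \oplus P_{x_n}$. Applying the left exact contravariant functor $\Hom(-,N)$ to this exact sequence yields an exact sequence $0 \to \Hom(M,N) \stackrel{f^{*}}{\rightarrow} \Hom(P,N)$, so $\Hom(M,N)$ is isomorphic to a subspace of $\Hom(P,N) \cong \bigoplus_{i=1}^{n} \Hom(P_{x_i},N) \cong \bigoplus_{i=1}^{n} N(x_i)$. The latter is a finite direct sum of finite dimensional spaces, hence finite dimensional, and therefore so is its subspace $\Hom(M,N)$.

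For part $(2)$, I would pass to the opposite bound quiver via the duality $D_Q : \rep(Q,I) \to \rep(Q^{\rm \, op}, I^{\rm \, op})$; note that $(Q^{\rm \, op}, I^{\rm \, op})$ is again a bound quiver because admissibility of the ideal is a left-right symmetric condition. If $N$ is finitely co-generated, then by the contrapositive of Lemma~\ref{firstlemma}$(2)$ the representation $D_Q(N)$ is finitely generated in $\rep(Q^{\rm \, op}, I^{\rm \, op})$. Since $D_Q$ is a duality, $\Hom_{\rep(Q,I)}(M,N) \cong \Hom_{\rep(Q^{\rm \, op}, I^{\rm \, op})}(D_Q N, D_Q M)$, which is finite dimensional by part $(1)$ applied in $\rep(Q^{\rm \, op}, I^{\rm \, op})$ to the finitely generated representation $D_Q(N)$.

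I do not expect any serious obstacle here: the statement is a soft consequence of local finite dimensionality, the left exactness of $\Hom(-,N)$, and the duality $D_Q$ that has already been constructed. The one point worth writing out explicitly is that ``finitely generated'' in the sense of Section~1 really does provide a cover by a \emph{finite} direct sum of the specific indecomposable projectives $P_x$, which is what makes $\Hom(P,N)$ split into the finitely many finite dimensional pieces $N(x_i)$; everything else is formal.
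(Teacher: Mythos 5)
Your proposal is correct and follows essentially the same route as the paper: for part $(1)$ it reduces to the isomorphism $\Hom(P_x,N)\cong N(x)$ via the embedding $\Hom(M,N)\hookrightarrow\Hom(P,N)$, and for part $(2)$ it applies the duality $D_Q$ to reduce to part $(1)$. The only difference is that you spell out a few details (left exactness of $\Hom(-,N)$, the appeal to Lemma~\ref{firstlemma}$(2)$) that the paper leaves implicit, which is harmless.
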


\begin{proof}
Suppose that $M$ is finitely generated. Then there exists an
epimorphism
$$P \to M \to 0$$
in $\rep(Q,I)$ with $P$ a projective representation which is a
finite direct sum of representations of the form $P_x$, $x\in Q_0$.
Since we have an inclusion $\Hom(M,N) \to \Hom(P,N)$, we only need
to show that $\Hom(P,N)$ is finite dimensional, which amount to the
same as showing that $\Hom(P_x,N)$ is finite dimensional for $x \in
Q_0$.  However, this is easy to show that $\Hom(P_x,N) \cong N(x)$
as $k$-vector spaces, which shows the claim.  Now, if $N$ is
finitely co-generated, then $D_Q(N)$ is finitely generated in
$\rep(Q^{\rm \, op},I^{\rm \, op})$ and $\Hom(M,N) \cong
\Hom(D_Q(N),D_Q(M))$ as $k$-vector spaces.  But by (1), the last
Hom-space is finite dimensional.
\end{proof}

Observe that the category $\Rep(Q,I)$ of all representations of the
bound quiver $(Q,I)$ is abelian, has co-products and satisfies
$$(\cup_{i\in I}A_i) \cap B = \cup_{i \in I} (A_i \cap B)$$
for $A,B \in \Rep(Q,I)$ and a directed family $\{A_i\}_{i \in I}$ of
subrepresentations of $A$.  This is called a \emph{$C_3$-category},
see \cite{Mi}. Using Lemma \ref{firstlemma}, we get the following.

\begin{Cor} \label{cor1}
Let $M \in \rep(Q,I)$ be indecomposable.
\begin{enumerate}[$(1)$]
    \item If $M$ is not finitely generated,
then there is no right almost split morphism ending in $M$ in
$\rep(Q,I)$. In particular, there is no almost split sequence ending
in $M$ in $\rep(Q,I)$.
    \item If $M$ is not finitely co-generated,
then there is no left almost split morphism starting in $M$ in
$\rep(Q,I)$. In particular, there is no almost split sequence
starting in $M$ in $\rep(Q,I)$.
\end{enumerate}
\end{Cor}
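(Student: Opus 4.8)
The plan is to read off both statements from Proposition~\ref{mainprop} and its dual, with the two preceding lemmas supplying the hypotheses; for the second part I would invoke the duality $D_Q$ rather than argue with inverse limits directly.

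For (1), assume $M$ is indecomposable and not finitely generated. By Lemma~\ref{firstlemma}(1) there is a chain $M_0 \subseteq M_1 \subseteq M_2 \subseteq \cdots$ of finitely generated \emph{proper} subrepresentations of $M$ with $\bigcup_{i\ge 0}M_i = M$; take the inclusions $f_i : M_i \hookrightarrow M_{i+1}$ as the chain of monomorphisms. In the $C_3$-category $\Rep(Q,I)$ the directed union $\bigcup_i M_i$ is the direct limit of $\{M_i,f_i\}_{i\ge0}$, with structure maps the inclusions $\varphi_i : M_i \hookrightarrow M$; since this union equals $M$ and $M$ is locally finite dimensional, the same cocone is also the direct limit inside the abelian $k$-category $\rep(Q,I)$, and the $\varphi_i$ are proper monomorphisms because the $M_i$ are proper. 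Moreover each $M_i$ is finitely generated, so by the lemma preceding the present corollary $\Hom_{\rep(Q,I)}(M_i,E)$ is finite dimensional for every $E \in \rep(Q,I)$. Thus Proposition~\ref{mainprop}, applied with $\C = \rep(Q,I)$ and $N = M$, yields that there is no right almost split morphism ending in $M$. Since the map $E \to M$ in an almost split sequence $0 \to \tau M \to E \to M \to 0$ is right almost split, there is no such sequence either.

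For (2), assume $M$ is indecomposable and not finitely co-generated. By Lemma~\ref{firstlemma}(2) the representation $D_Q(M)$ of $(Q^{\rm op},I^{\rm op})$ is indecomposable and not finitely generated, so by part (1) there is no right almost split morphism ending in $D_Q(M)$ in $\rep(Q^{\rm op},I^{\rm op})$. Since $D_Q$ is a duality from $\rep(Q,I)$ onto $\rep(Q^{\rm op},I^{\rm op})$, it carries a left almost split morphism $M \to E$ to a right almost split morphism $D_Q(E) \to D_Q(M)$; hence no left almost split morphism starts in $M$, and in particular no almost split sequence starts in $M$.

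The only point that requires care is the identification of the direct limit in the first part: one must check both that the colimit of $\{M_i\}$ in $\Rep(Q,I)$ is genuinely the directed union $\bigcup_i M_i$ — this is exactly where the $C_3$-property recorded just above is used — and that, this union being $M$ and hence locally finite dimensional, it still serves as the colimit after restricting to $\rep(Q,I)$, so that Proposition~\ref{mainprop} is indeed applicable within $\rep(Q,I)$. Everything else is a routine verification of the hypotheses.
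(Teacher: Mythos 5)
Your proof of (1) is exactly the paper's argument: the chain from Lemma \ref{firstlemma}(1), the identification of the union with the direct limit in $\Rep(Q,I)$ via the $C_3$-property, the observation that it remains the direct limit in the full subcategory $\rep(Q,I)$, the Hom-finiteness from the preceding lemma, and then Proposition \ref{mainprop}. For (2) the paper simply says ``the second is dual''; your explicit transfer through the duality $D_Q$ (rather than through the inverse-limit dual of Proposition \ref{mainprop}) is a correct and entirely reasonable way to make that remark precise.
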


\begin{proof}
We need only to prove the first statement since the second is dual.
Let $M \in \rep(Q,I)$ be non-finitely generated. From Lemma
\ref{firstlemma}, there exists a chain
$$M_0 \subseteq M_1 \subseteq M_2 \subseteq \cdots$$
of finitely generated proper subrepresentations of $M$ such that
$$\bigcup_{i \ge 0}M_i = M.$$
From \cite[Page 82, Prop. 1.2]{Mi}, $\{M,\varphi_i\}_{i\ge 0}$ is
the direct limit of the $M_i$ with $\varphi_i : M_i \to M$ being the
inclusion morphism.  But then $\{M,\varphi_i\}_{i\ge 0}$ is also the
direct limit of the $M_i$ in $\rep(Q,I)$ since $M \in \rep(Q,I)$ and
$\rep(Q,I)$ is a full subcategory of $\Rep(Q,I)$.  Then we can apply
Proposition \ref{mainprop}.
\end{proof}

Now, we need to look at representations which are finitely generated
but not finitely presented.  Before stating the next proposition, we
need to introduce some definitions.  The following could be found in
\cite{War} in the context of a $C_3$-category. However, we apply
them for the category $\Rep(Q,I)$.  A representation $A \in
\Rep(Q,I)$ is said to be \emph{small} if whenever $f : A \to
\oplus_{i \in I}A_i$ is a morphism with $p_i : \oplus_{i \in I}A_i
\to A_i$ the canonical projections, then $p_if=0$ for all but a
finite number of $i \in I$. It is easy to see that for $x \in Q_0$,
$P_x$ is small in $\Rep(Q,I)$ and consequently, every finitely
generated representation of $\rep(Q,I)$ is small in $\Rep(Q,I)$.  A
representation $A \in \Rep(Q,I)$ is said to be \emph{$\sigma$-small}
is it is the union of a chain
$$A_0 \subseteq A_1 \subseteq A_2 \subseteq \cdots$$
of small objects of $\Rep(Q,I)$.  By Lemma \ref{firstlemma}, every
object in $\rep(Q,I)$ is $\sigma$-small. Suppose that $E \in
\rep(Q,I)$ decomposes as a finite direct sum of indecomposable
representations.  In particular, by \cite[Section 3.6]{GR}, it
decomposes as a finite direct sum of $\sigma$-small representations
with local endomorphism algebras.  By \cite[Theorem 7]{War}, every
other decomposition of $E$ refines to this given decomposition. This
fact will be useful in the proof of the following.

\begin{Prop}
Let $M \in \rep(Q,I)$ be indecomposable.
\begin{enumerate}[$(1)$]
    \item If $M$ is finitely generated but not finitely
presented, then there is no almost split sequence ending in $M$ in
$\rep(Q,I)$.  In particular, there is no right almost split morphism
$E \to M$ in $\rep(Q,I)$ with $E$ a finite direct sum of
indecomposable representations.
    \item If $M$ is finitely co-generated but not finitely
co-presented, then there is no almost split sequence starting in $M$
in $\rep(Q,I)$.  In particular, there is no left almost split
morphism $M \to E$ in $\rep(Q,I)$ with $E$ a finite direct sum of
indecomposable representations.
\end{enumerate}
\end{Prop}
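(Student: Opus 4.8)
The plan is to adapt the argument of Proposition \ref{mainprop} to a directed system of \emph{epimorphisms}. Since part (2) follows from part (1) applied to the opposite bound quiver via the duality $D_Q$ (which exchanges finitely generated with finitely co-generated, finitely presented with finitely co-presented, and left almost split morphisms starting at $M$ with right almost split morphisms ending at $D_QM$, and sends almost split sequences to almost split sequences), I would only treat (1). So suppose, for a contradiction, that $g:E\to M$ is a right almost split morphism in $\rep(Q,I)$; this covers simultaneously the almost split sequence $0\to L\to E\xrightarrow{g}M\to 0$ and the case of $E$ a finite direct sum of indecomposables. Since $M$ is finitely generated, fix an epimorphism $\pi:P\to M$ with $P$ a finite direct sum of representations $P_x$, and set $K=\ker\pi\in\rep(Q,I)$; because $M$ is not finitely presented, $K$ is not finitely generated.

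Next, using Lemma \ref{firstlemma}(1) I would write $K=\bigcup_{i\ge0}K_i$ for a chain $K_0\subseteq K_1\subseteq\cdots$ of finitely generated proper subrepresentations of $K$, and put $L_i=P/K_i$ with the natural epimorphisms $\ell_i:L_i\to L_{i+1}$ and $\varphi_i:L_i\to M$ induced by $K_i\subseteq K_{i+1}$ and $K_i\subseteq K$. Each $L_i$ is finitely presented, hence finitely generated, so $\Hom(L_i,E)$ is finite dimensional; and since $\Rep(Q,I)$ is a $C_3$-category, $\{M,\varphi_i\}_{i\ge0}=P/\bigcup_iK_i$ is the direct limit of $\{L_i,\ell_i\}_{i\ge0}$, exactly as in the proof of Corollary \ref{cor1}. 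The crucial point — and what I expect to be the main obstacle — is to verify that each $\varphi_i$ is an epimorphism which is \emph{not} a retraction: if some $\varphi_i$ split, then $M$ would be a direct summand of the finitely presented representation $L_i$, and since direct summands of finitely generated representations are finitely generated and extensions of finitely generated representations by finitely generated representations are finitely generated, $M$ itself would be finitely presented, contrary to hypothesis.

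With this in hand I would run the proof of Proposition \ref{mainprop} verbatim, with $\{L_i,\ell_i,\varphi_i\}$ in place of $\{N_i,f_i,\varphi_i\}$ and $g$ in place of $h$: for each $i$ the space $\Lambda_i:=\{\,s\in\Hom(L_i,E)\mid gs\in k\varphi_i\,\}$ is finite dimensional and nonzero, since $g$ is right almost split and $\varphi_i$ is not a retraction, so $\Lambda_i$ contains a normalized map ($gs=\varphi_i$); the $\ell_i$ induce nonzero maps $\Lambda_{i+1}\to\Lambda_i$ carrying normalized maps to normalized maps; stabilizing the descending chains of images $W_{ij}=\mathrm{Im}(\Lambda_i\to\Lambda_j)$ of finite dimensional spaces yields a non-decreasing sequence $r_0\le r_1\le\cdots$ and surjections $W_{r_{i+1},i+1}\to W_{r_i,i}$, from which one extracts compatible normalized maps $u_i:L_i\to E$ with $gu_i=\varphi_i$ and $u_{i+1}\ell_i=u_i$; the universal property of the direct limit $M=\varinjlim L_i$ then glues these to $h':M\to E$ with $gh'=1_M$, contradicting that $g$ is right almost split. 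This shows there is no right almost split morphism ending in $M$ at all, which contains both assertions of (1); for the ``in particular'' clause one may alternatively invoke Warfield's Theorem 7 quoted above (via $\sigma$-smallness and local endomorphism rings) to replace a right almost split morphism out of a finite direct sum of indecomposables by a minimal one, hence by an honest almost split sequence, and then apply the first assertion. Everything beyond the non-retraction check is a transcription of the earlier proof.
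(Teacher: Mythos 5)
Your argument is correct, but it takes a genuinely different route from the paper and in fact establishes a stronger statement. The paper splits the proof in two: to rule out an almost split sequence $0\to L\to E\xrightarrow{h}M\to 0$ it pushes out the presentation $0\to\Omega\to P\to M\to 0$ along infinitely many simple quotients $S_i$ of the non-finitely generated syzygy $\Omega$, uses the almost split property to embed each $S_i$ into $L$, concludes that $L$ has an infinite dimensional socle and hence is not finitely co-generated, and gets a contradiction from Corollary \ref{cor1}(2); for the ``in particular'' clause it runs a separate minimality argument, using the Gabriel--Roiter splitting of an endomorphism into a pointwise nilpotent part and an isomorphism together with Warfield's refinement theorem, to show that a non-minimal right almost split morphism out of a finite direct sum of indecomposables would force an infinite decomposition of $E$. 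You instead realize $M$ as the filtered colimit of the finitely presented quotients $L_i=P/K_i$ and rerun the stabilization argument of Proposition \ref{mainprop}. That proof uses the hypothesis that the $\varphi_i$ are proper monomorphisms only to guarantee that they are not retractions, and you correctly identify and supply the needed substitute: a retraction $\varphi_i$ would exhibit $M$ as a direct summand of the finitely presented $L_i$, hence finitely presented, against the hypothesis. The remaining ingredients --- finite dimensionality of $\Hom(L_i,E)$ because $L_i$ is finitely generated, exactness of filtered colimits in $\Rep(Q,I)$ giving $\varinjlim L_i=M$ together with the restriction of the universal property to the full subcategory $\rep(Q,I)$, and the image-stabilization extraction of compatible normalized lifts $u_i$ --- all go through unchanged, since the linear-algebra part of Proposition \ref{mainprop} never uses that the structure maps are monomorphisms. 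What your route buys is the uniform conclusion that \emph{no} right almost split morphism $E\to M$ exists, with no hypothesis whatsoever on $E$; this subsumes both assertions of part (1) and renders the Warfield machinery unnecessary (your fallback via minimal versions is fine but redundant). What the paper's route buys is that it reuses the already-established Corollary \ref{cor1} instead of reopening the proof of Proposition \ref{mainprop}, at the cost of reaching only the two special cases stated.
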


\begin{proof}
Again, we only prove the first assertion. Suppose the contrary.  Let
$$(*): 0 \to L \to E \stackrel{h}{\rightarrow} M\to 0$$
be an almost split sequence in $\rep(Q,I)$ with $M$ finitely
generated but not finitely presented. Let
$$0\to \Omega \to P \to M \to 0$$
be a short exact sequence with $P$ a finite direct sum of
representations of the form $P_x$, $x \in Q_0$.  Then, by the
hypothesis, $\Omega$ is not finitely generated. In particular, being
a subrepresentation of a finitely generated representation, it has
an infinite dimensional top. Let $\{u_i\}_{i \ge 1}$ be an infinite
family in $Q_0$ such that the top of $\Omega$ has support
$\{u_i\}_{i\ge 1}$. For each $i$, let $S_i$ be a simple quotient of
$\Omega$ with the support of $S_i$ being $\{u_i\}$. Consider the
pushout diagram
$$\xymatrix@1{0 \ar[r] & \Omega \ar[r] \ar[d] & \,P \ar[d] \ar[r] & \,M \ar@{=}[d] \ar[r] & \,0\\
0 \ar[r] & S_i \ar[r]^g & \,E_i \ar[r]^f & \,M\ar[r] & \,0}$$ The
last row is not split since $\Omega \to P$ is a radical morphism.
Hence, one has the following pushout diagram by using the fact that
$(*)$ is almost split.
$$\xymatrix@1{0 \ar[r] & S_i \ar[r]^g \ar[d]^v & \,E_i \ar[d]^u \ar[r]^f & \,M \ar@{=}[d] \ar[r] & \,0\\
0 \ar[r] & L \ar[r] & \,E \ar[r]^h & \,M \ar[r] & \,0}$$ Observe
that $ug \ne 0$ since otherwise, $u$ induces a map $u': M \to E$
such that $u=u'f$.  But then, $hu'f = f$ yielding $hu' = 1_M$ since
$f$ is an epimorphism.  This is a contradiction. Therefore, $v$ is
non-zero and hence is a monomorphism. This shows that $L$ is not
finitely co-generated, since it has an infinite dimensional socle.
Hence, there is no left almost split morphism starting in $L$ using
part (2) of Corollary \ref{cor1}. But this contradicts the fact that
$(*)$ is almost split.

Now, suppose that $f : E \to M$ is right almost split with $E$ a
finite direct sum of indecomposable representations. Since, $M$ is
not finitely presented, $M$ is not projective and hence, $f$ is an
epimorphism. Let $h$ be an endomorphism of $E$ such that $fh=f$. By
\cite[Section 3.8]{GR}, $h$ induces a decomposition $E=E_1 \oplus
E_2$ of $E$ such that $f$ is stable on $E_1$ and $E_2$ and $h$ is
pointwise nilpotent on $E_1$, meaning that for each $x \in Q_0$,
there exists a positive integer $n$ such that $(h_x)^n(E_1)=0$.
Moreover, $h$ is an isomorphism on $E_2$.  Let $h_1 : E_1 \to E_1$
and $h_2 : E_2 \to E_2$ be the restriction of $h$ to $E_1$ and
$E_2$, respectively. We have $h=h_1 \oplus h_2$. Let $f=(f_1 \;
f_2)$ be the corresponding decomposition of $f$. Since $h_1$ is
locally nilpotent, we have that, for $x \in Q_0$, there exists a
positive integer $r$ such that $(f_1)_x = (f_1)_x(h_1)_x^r=0$.
Hence, $f_1=0$, meaning that $f_2$ is a right almost split
epimorphism. Now, if every endomorphism $h : E \to E$ with $fh=f$ is
an automorphism, then $f$ is a right minimal almost split
epimorphism (see \cite{AR3}) and
$$0 \to {\rm Ker}f \to E \to M \to 0$$
is then an almost split sequence by using classical arguments on
almost split sequences. This is a contradiction. Hence, there exists
a non-automorphism $h: E \to E$ providing a non-trivial
decomposition $E= E_1 \oplus E_2$ as above with $f_2 : E_2 \to M$
being a right almost split epimorphism. Now, $f_2$ is not minimal
since this would yield an almost split sequence ending in $M$.
Hence, as argued above, we can decompose $E_2$ non-trivially as $E_2
= E_3 \oplus E_4$ with the restriction $E_4 \to M$ of $f_2$ being a
right almost split epimorphism. We can continue this process
infinitely many times, decomposing $E$ as an infinite direct sum of
subrepresentations of $E$. However, this is a contradiction by the
remark preceding the lemma.
Hence, there is no right almost split morphism $E \to M$ with $E$
being a finite direct sum of indecomposable representations.
\end{proof}

As mentioned in Section 1, it is proved in \cite{A2} that there
exists an almost split sequence in $\rep(Q,I)$ ending at each
indecomposable finitely presented and non-projective representation
$M$.  One also has the dual result. Therefore, we get the following
main theorem by combining the results obtained so far.

\begin{Theo}
Let $M \in \rep(Q,I)$ be indecomposable.
\begin{enumerate}[$(1)$]
    \item There is an almost split sequence ending in $M$ in
    $\rep(Q,I)$ if and only if $M$ is finitely presented and
    non-projective.
    \item There is an almost split sequence starting in $M$ in
    $\rep(Q,I)$ if and only if $M$ is finitely co-presented and
    non-injective.
\end{enumerate}
\end{Theo}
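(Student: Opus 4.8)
The plan is to obtain the theorem by assembling the results of Sections 2 and 3 together with the existence result quoted from \cite{A2} in Section 1. I would prove part (1) in full and then deduce part (2) by applying part (1) in the opposite bound quiver $(Q^{\rm op}, I^{\rm op})$ and transporting the statement through the duality $D_Q$, matching the finiteness conditions via Lemma \ref{firstlemma}(2) and its co-presented analogue.

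For the ``if'' direction of (1), suppose $M$ is finitely presented and non-projective. This is exactly the hypothesis of the existence theorem of \cite{A2} recalled in Section 1, which yields an almost split sequence $0 \to \tau M \to E \to M \to 0$ in $\rep(Q,I)$ (with $\tau M$ finitely co-presented). So this direction requires nothing beyond invoking that result.

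For the ``only if'' direction, suppose there is an almost split sequence $0 \to L \to E \stackrel{h}{\rightarrow} M \to 0$ in $\rep(Q,I)$. First, $M$ is non-projective: were it projective, the identity $1_M$ would factor through the right almost split morphism $h$, making $h$ a retraction, which is excluded. It remains to show $M$ is finitely presented. If $M$ were not finitely generated, then Corollary \ref{cor1}(1) would forbid any right almost split morphism ending in $M$, contradicting the existence of $h$. If $M$ were finitely generated but not finitely presented, then the immediately preceding Proposition, part (1), would forbid any almost split sequence ending in $M$, again a contradiction. Hence $M$ is finitely presented and non-projective.

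I do not expect a genuine obstacle here, since the theorem is a synthesis of the earlier statements; the only point needing a little care is the clean deduction of part (2). For that, I would observe that an almost split sequence starting at $M$ in $\rep(Q,I)$ corresponds under $D_Q$ to an almost split sequence ending at $D_Q M$ in $\rep(Q^{\rm op}, I^{\rm op})$, and that by part (1) the latter exists if and only if $D_Q M$ is finitely presented and non-projective in $\rep(Q^{\rm op}, I^{\rm op})$; since $D_Q$ is a duality, this is equivalent to $M$ being finitely co-presented and non-injective in $\rep(Q,I)$, which gives (2).
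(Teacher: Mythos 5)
Your proposal is correct and matches the paper's own treatment: the theorem is obtained there exactly by combining the existence result of \cite{A2} with Corollary \ref{cor1} and the preceding Proposition, plus the standard observation that a projective cannot end an almost split sequence. Your derivation of part (2) via the duality $D_Q$ is only cosmetically different from the paper's appeal to the already-stated dual versions of each ingredient.
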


\bigskip

{\em Acknowledgments} The author is thankful to S. Liu and V.
Shramchenko for financial support while doing a postdoctorate at the
University of Sherbrooke.


\begin{thebibliography}{99}


\bibitem{ASS} I. Assem, D. Simson and A. Skowro\'nski, {\em Elements of representation theory of associative algebras. Vol.
1.}, Techniques of representation theory. London Mathematical
Society Student Texts {\bf 65} Cambridge University Press,
Cambridge, 2006.

\bibitem{A1} M. Auslander {\em Almost split sequences and algebraic
geometry}, Representations of algebras, Durham, 1985, 165--179,
London Math. Soc. Lecture Note Ser. {\bf 116} Cambridge University
Press, Cambridge, 1986.

\bibitem{A2}  M. Auslander, {\em A survey of existence theorems for almost split
sequences}, Representations of algebras, London Math. Soc. Lecture
Note Ser. {\bf 116} Cambridge University Press, Cambridge, 1986,
81--89.

\bibitem{AR2} M. Auslander and I. Reiten, {\em Representation theory of Artin algebras. III. Almost split
sequences}, Comm. Algebra {\bf 3} (1975), 239--294.

\bibitem{AR3} M. Auslander and  I. Reiten, {\em Representation theory of artin
algebras IV}, Comm. Alebra {\bf 5} (1977) 443--518.


\bibitem{Hu} L. Angeleri-H$\ddot{u}$gel,
{\em On the existence of left and right almost split morphisms},
Representations of algebras, São Paulo, 1999, 1--9, Lecture Notes in
Pure and Appl. Math. {\bf 224} Dekker, New York, 2002.

\bibitem{HuPena}L. Angeleri-H$\ddot{u}$gel and J.A. de la Peña, {\em Locally finitely generated modules over rings
with enough idempotents}, J. Algebra Appl. {\bf 8} (2009), no. 6,
885--901.

\bibitem{BLP}  R. Bautista, S. Liu and C. Paquette, {\em Auslander-Reiten theory over an infinite quuiver}, preprint.

\bibitem{But} W. Burt and M.C.R. Butler {\em Almost split sequences for bocses}, Representations of finite-dimensional algebras, Tsukuba, 1990, 89--121,
CMS Conf. Proc. {\bf 11} Amer. Math. Soc., Providence, RI, 1991.

\bibitem{CS} F. Coelho and S. Smalø, {\em Almost split sequences in categories of quivers with
relations}, Representations of algebra. Vol. I, II, 183--191,
Beijing Norm. Univ. Press, Beijing, 2002.

\bibitem{GR} P. Gabriel et A.V. Roiter, {\em Representations of finite
dimensional algebras},  Algebra VIII, Encyclopedia Math. Sci. {\bf
73} Springer, Berlin, 1992.

\bibitem{Mi} B. Mitchell, {\em Theory of categories}, Pure and Applied Mathematics, Vol. XVII, Academic Press, New York-London, 1965, xi+273 pp.

\bibitem{L3} S. Liu, {\em Auslander-Reiten theory in a Krull-Schmidt
category}, Proceedings of ICRA XIII (S$\tilde{a}$o Paulo), to
appear.

\bibitem{War} R.B. Warfield Jr. {\em Decompositions of injective modules}, Pacific J. Math. {\bf 31} (1969), 263--276.

\bibitem{Zimm} W. Zimmermann {\em Existenz von
Auslander-Reiten-Folgen}, Arch. Math. (Basel) {\bf 40} (1983), no.
1, 40--49.
\end{thebibliography}
\end{document}